\newcommand{\bburl}[1]{\textcolor{blue}{\url{#1}}}
\newcommand{\monthyear}[1]{%
  \def\@monthyear{\uppercase{#1}}}
\newcommand{\volnumber}[1]{%
  \def\@volnumber{\uppercase{#1}}}
\def\ps@plain{\ps@empty
  \def\@oddfoot{\@monthyear \hfil \thepage}%
  \def\@evenfoot{\thepage \hfil \@volnumber}}
\def\ps@firstpage{\ps@plain}
\def\ps@headings{\ps@empty
  \def\@evenhead{%
    \setTrue{runhead}%
    \def\thanks{\protect\thanks@warning}%
    %%%%%%%%%%%%%%%%%%%%%%%%%%%%%%%%%%%%%%%%%%%%%%%%%%%%%%%%%%%Restore this later%%%%%%%%%%%%%%%%%%%%%%%%%%%%%%%%%
    %\uppercase{The Fibonacci Quarterly}\hfil}%
    \uppercase{\ }\hfil}%
    %%%%%%%%%%%%%%%%%%%%%%%%%%%%%%%%%%%%%%%%%%%%%%%%%%%%%%%%%%%Restore this later%%%%%%%%%%%%%%%%%%%%%%%%%%%%%%%%%
  \def\@oddhead{%
    \setTrue{runhead}%
    \def\thanks{\protect\thanks@warning}%
    \hfill\uppercase{Higher Order Fibonacci Sequences from Generalized Schreier sets}}%
    %\hfill\uppercase{Gaussian Behavior in Zeckendorf Decompositions Arising From Two-Dimensional Lattices}}
  \let\@mkboth\markboth
  \def\@evenfoot{%
    \thepage \hfil \@volnumber}%
  \def\@oddfoot{%
    \@monthyear \hfil \thepage}%
  }%
\theoremstyle{plain}
\numberwithin{equation}{section}
\newtheorem{thm}{Theorem}[section]
\newcommand{\ignore}[1]{}
\newcommand\be{\begin{eqnarray}}
\newcommand\ee{\end{eqnarray}}
\newcommand\bea{\begin{eqnarray}}
\newcommand\eea{\end{eqnarray}}
\newcommand\ben{\begin{enumerate}}
\newcommand\een{\end{enumerate}}
\newtheorem{cor}[thm]{Corollary}
\newtheorem{rek}[thm]{Remark}
\newcommand{\N}{\mathbb{N}}
\begin{document}
%% replace the values in the next three lines by the correct information

\monthyear{September 2019}
\volnumber{Volume, Number}
\setcounter{page}{1}
\title{Higher Order Fibonacci Sequences from Generalized Schreier sets}

\author{H\`ung Vi\d{\^e}t Chu, Steven J. Miller, and Zimu Xiang}

\address{Department of Mathematics, University of Illinois at Urbana-Champaign, Urbana, IL 61820} \email{chuh19@mail.wlu.edu}

\address{Department of Mathematics and Statistics, Williams College, Williamstown, MA 01267} \email{sjm1@williams.edu}

\address{Department of Mathematical Sciences, Carnegie Mellon University, Pittsburgh, PA 15213} \email{zimux@andrew.cmu.edu}

\date{\today}

\begin{abstract}
A Schreier set $S$ is a subset of the natural numbers with $\min S\ge |S|$. It has been known that the sequence $(a_{1,n})$, where
$$a_{1,n}\ :=\ |\{S\subseteq \mathbb{N}\,:\,\max S = n\mbox{ and } \min S \ge |S|\}|,$$ is the Fibonacci sequence. Generalizing this result, we prove that for all $p\in \mathbb{N}$, the sequence $(a_{p,n})$, where $$a_{p, n} \ :=\ |\{S\subseteq \mathbb{N}\,:\,\max S = n\mbox{ and } \min S\ge p|S|\}|,$$ has a linear recurrence relation of higher order. We investigate further by requiring that ${\rm min}_2 S\ge q |S|$, where $\min_2 S$ is the second smallest element of $S$. We prove a linear recurrence relation for the sequence $(a_{p, q, n})$, where $$a_{p, q, n} \ :=\  |\{S\subseteq \mathbb{N}\,:\,\max S = n, \min S \ge p|S|\mbox{ and } {\rm min}_2 S\ge q|S|\}|,$$
and discuss a curious relationship between $(a_{q, n})$ and $(a_{p, q, n})$.
\end{abstract}

\thanks{This work was partially supported by NSF grants DMS1561945 and DMS1659037, Carnegie Mellon University, Washington and Lee University, and Williams College. The authors are thankful for the anonymous referee's insightful comments that improved the paper's content and exposition.}

\maketitle
%%%%%%%%%%%%%%%%%%%%%%%%%%%%%%%%%%%%%%%%%%%%%%%%%%%%%%%%%%%%%%%%%%%%%%%%%%%%%%%%%%%%%%%%%%%%%%%%%%%%%%%%%%%%%%%%%%%%%%%%%%%%%%%%%%%%%%%%%%%%%%%%%%%%%%%%%%%%%%%%%%%%%
%%%%%%%%%%%%%%%%%%%%%%%%%%%%%%%%%%%%%%%%%%%%%%%%%%%%%%%%%%%%%%%%%%%%%%%%%%%%%%%%%%%%%%%%%%%%%%%%%%%%%%%%%%%%%%%%%%%%%%%%%%%%%%%%%%%%%%%%%%%%%%%%%%%%%%%%%%%%%%%%%%%%%
%%%%%%%%%%%%%%%%%%%%%%%%%%%%%%%%%%%%%%%%%%%%%%%%%%%%%%%%%%%%%%%%%%%%%%%%%%%%%%%%%%%%%%%%%%%%%%%%%%%%%%%%%%%%%%%%%%%%%%%%%%%%%%%%%%%%%%%%%%%%%%%%%%%%%%%%%%%%%%%%%%%%%
\section{Introduction}
A Schreier set $S$ is a subset of the natural numbers with $\min S\ge |S|$, and the Schreier family containing all Schreier sets is denoted by $\mathcal{S}_1$. Schreier defined them to solve a problem in Banach space theory in 1930 \cite{Sch}. These sets were also independently discovered in combinatorics and are connected to Ramsey-type theorems for subsets of $\mathbb{N}$. Bird \cite{UA} proved that the Fibonacci sequence appears if we count Schreier sets under certain conditions.

Define \begin{align*} M_{1,n} \ :=\  \{S\in\mathcal{S}_1\,:\,\max S = n\}.\end{align*} Then $|M_{1,1}|= 1$, $|M_{1,2}| = 1$ and $|M_{1,n+2}| = |M_{1,n+1}| + |M_{1,n}|$ for all $n\ge 1$ \cite{UA}. The proof uses two one-to-one mappings to argue about cardinalities of sets. We generalize this result by defining, for $p \in \mathbb{N}$,
\begin{align*} \mathcal{S}_p \ :=\ \{S\subseteq \mathbb{N}:\min S\ge p|S|\}, \ \ \  {\rm and} \ \ \ M_{p,n} \ :=\ \{S\in\mathcal{S}_p\,:\,\max S = n\}, \end{align*} and prove the following\footnote{Our definition of $\mathcal{S}_p$ is not the same as what used in Banach space theory to indicate the Schreier sets of order $p$ \cite{AA}.}.

\begin{thm}\label{mainTheo}
Given $p\in \mathbb{N}$, consider the sequence $(|M_{p,n}|)_{n=1}^\infty$. We have
\begin{enumerate}
\item $|M_{p,n+p}| = \sum_{k=1}^{n+p-1}\sum_{j=0}^{k/p-2}\binom{n+p-k-1}{j}+1$, and

\item for $n\ge 1$, $|M_{p,n+p+1}| = |M_{p,n+p}|+|M_{p,n}|$.
\end{enumerate}
We call $(|M_{p,n}|)_{n=1}^\infty$ the generalized Schreier-Fibonacci sequence of order $p$.
\end{thm}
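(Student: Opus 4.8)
The plan is to bypass the double sum of part (1) by first establishing a cleaner single‑sum formula for $|M_{p,n}|$, then to deduce the recurrence of part (2) from it via Pascal's rule, and finally to identify the double sum with that single sum by a hockey‑stick computation.

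\textbf{Step 1: a direct count.} I would first prove that for all $n\ge 1$,
\begin{equation}\label{eq:rawcount}
|M_{p,n}|\;=\;\sum_{k\ge 1}\binom{n-pk}{k-1},
\end{equation}
where $\binom{a}{b}$ is the combinatorial binomial coefficient, equal to $0$ unless $0\le b\le a$. Indeed, a set $S\in M_{p,n}$ with $|S|=k$ must contain $n$ as its maximum, and the condition $\min S\ge pk$ forces all of its elements into $\{pk,pk+1,\dots,n\}$; thus its other $k-1$ elements form an arbitrary $(k-1)$‑element subset of $\{pk,pk+1,\dots,n-1\}$, a set of $n-pk$ integers, and conversely every such choice produces a valid $S$. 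Summing over $k$ gives \eqref{eq:rawcount}, a finite sum since the terms vanish once $pk>n$.

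\textbf{Step 2: the recurrence, part (2).} Granting \eqref{eq:rawcount}, I would compute
\[
|M_{p,n+p+1}|-|M_{p,n+p}|\;=\;\sum_{k\ge 1}\left[\binom{n+p+1-pk}{k-1}-\binom{n+p-pk}{k-1}\right].
\]
For $k\ge 2$ Pascal's identity rewrites the bracket as $\binom{n+p-pk}{k-2}$, and for $k=1$ the bracket is $\binom{n+1}{0}-\binom{n}{0}=0$ since $n\ge 1$. Re‑indexing $k\mapsto k+1$ in the surviving sum turns it into $\sum_{k\ge 1}\binom{n-pk}{k-1}=|M_{p,n}|$ by \eqref{eq:rawcount}, which is (2). (Alternatively one could give a Bird‑style bijective proof: $S\mapsto S-1$ maps $\{S\in M_{p,n+p+1}:\min S>p|S|\}$ bijectively onto $M_{p,n+p}$, while the sets with $\min S=p|S|$ biject with $M_{p,n}$ by deleting the smallest element and renormalizing so that the gaps between consecutive entries are preserved; I would keep whichever argument reads more cleanly.)

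\textbf{Step 3: the closed form, part (1).} It remains to check that the double sum equals $|M_{p,n+p}|-1$. Interchanging the summations, for fixed $j$ the index $k$ runs over $p(j+2)\le k\le n+p-1$; substituting $\ell=n+p-k-1$ and applying the hockey‑stick identity $\sum_{\ell=0}^{M}\binom{\ell}{j}=\binom{M+1}{j+1}$ collapses the inner sum to $\binom{n-p-pj}{j+1}$. Hence the double sum equals $\sum_{j\ge 0}\binom{n-p(j+1)}{j+1}=\sum_{m\ge 1}\binom{n-pm}{m}$, which by \eqref{eq:rawcount} — after peeling off the $k=1$ term $\binom{n}{0}=1$ — is exactly $|M_{p,n+p}|-1$. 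The mathematics is elementary throughout; the only real care is bookkeeping the degenerate binomials: the inner sum in (1) is empty for $k<2p$, the hockey‑stick collapse must be verified when its upper limit is negative, and Pascal's rule is invoked only with lower index $\ge 1$ (the case $k=1$ is handled separately, and that is precisely where $n\ge 1$ enters). I anticipate no conceptual obstacle once \eqref{eq:rawcount} is in place; if instead one prefers the bijective route for Step 2, the single nontrivial point is constructing the second bijection (for sets with $\min S=p|S|$), but the gap‑preserving recipe above supplies it.
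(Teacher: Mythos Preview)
Your argument is correct, but it proceeds quite differently from the paper's. For part~(1) the paper reads the double sum directly off a stratification by the \emph{minimum element} $k$ of $S$: with $\min S=k$ and $\max S=n+p$ fixed, one chooses $j\le k/p-2$ further elements from the $n+p-k-1$ integers strictly between them, and the ``$+1$'' is the singleton $\{n+p\}$. You instead stratify by the \emph{cardinality} $k$ to obtain the cleaner single sum~\eqref{eq:rawcount}, and only afterwards identify it with the stated double sum via the hockey-stick identity. For part~(2) the paper gives a bijective proof, partitioning $M_{p,n+p+1}$ according to whether $n+p\in S$: the sets with $n+p\notin S$ biject with $M_{p,n+p}$ by swapping $n+p+1$ for $n+p$, and the sets with $n+p\in S$ biject with $M_{p,n}$ via $S\mapsto (S+p)\cup\{n+p+1\}$. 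Your primary route is algebraic, extracting the recurrence from~\eqref{eq:rawcount} by Pascal's rule; your parenthetical bijective alternative partitions instead by whether $\min S=p|S|$, which also works, though ``renormalizing so that the gaps are preserved'' should be stated more concretely (shifting $S\setminus\{\min S\}$ down by $p+1$ does the job). The paper's approach explains \emph{why} the double sum is the natural object and develops the bijective machinery reused in the later theorems on $M_{p,q,n}$; your approach trades that for a tidier intermediate formula from which both parts fall out with only binomial identities.
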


Another natural extension is to put an additional restriction on our set $S$; in particular, we require that $\min_2 S \ge q|S|$, where $\min_2 S$ is the second smallest element in $S$. We define \begin{align*} \mathcal{S}_{p,q} \ :=\ \{S\subseteq \N\,:\,\min S\ge p|S|\mbox{ and } {\rm min}_2 S\ge q|S|\}.\end{align*}

For a given $n$, we consider the family of sets $M_{p,q, n}= \{S\in \mathcal{S}_{p,q}\,:\,\max S = n\}$. When a set has exactly one element, we take the element to be both the smallest and the second smallest. The following theorem gives an explicit formula to calculate $|M_{p,q, n}|$.

\begin{thm}\label{further}
Given $p < q\in\N$, for the sequence $(|M_{p,q,n}|)_{n\ =\ 1}^\infty$, we have $|M_{p,q, n}| = 0$ if
$n\le q-1$, $|M_{p,q, n}| = 1$ if $q\le n\le 2q-1$ and
\begin{equation*}
|M_{p,q, n}|\ =\ 1+(n-2p)+\sum_{k\ =\ 3}^{\frac{n+2}{q+1}}\sum_{i\ =\ q k}^{n+2-k}(i-p k)\binom{n-i-1}{k-3} \mbox{{\rm\ if\ } }n\ge 2q.
\end{equation*}
\end{thm}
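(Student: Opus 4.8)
The plan is to split $M_{p,q,n}$ according to the cardinality $k=|S|$ and count each layer separately, then sum. Writing a set in $M_{p,q,n}$ as $S=\{a_1<a_2<\cdots<a_k\}$ with $a_k=n$, the conditions $\min S\ge p|S|$ and $\min_2 S\ge q|S|$ translate into the two constraints $a_1\ge pk$ and $a_2\ge qk$ only (with the stated convention $a_1=a_2$ when $k=1$), so the intermediate elements $a_3,\dots,a_{k-1}$ are completely free subject to the ordering. This structural observation is what makes the count tractable.

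For $k=1$ the only candidate is $\{n\}$, and it lies in $M_{p,q,n}$ exactly when $n\ge q$ (the requirement $n\ge p$ being redundant since $p<q$); this contributes the constant $1$ as soon as $n\ge q$ and nothing when $n\le q-1$. For $k=2$ a set $\{a_1,n\}$ qualifies iff $a_1\ge 2p$ and $n\ge 2q$; when $n\ge 2q$ the admissible range $\{2p,\dots,n-1\}$ for $a_1$ is nonempty because $n\ge 2q\ge 2p+2$, and it has exactly $n-2p$ elements, yielding the term $n-2p$. When $q\le n\le 2q-1$ every layer with $k\ge 2$ is empty, since then $\min_2 S\le n<qk$, which forces $|M_{p,q,n}|=1$ on that range; and when $n\le q-1$ no layer survives.

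For $k\ge 3$ I would fix the value $a_2=i$. The block $a_3<\cdots<a_{k-1}$ is an arbitrary $(k-3)$-element subset of the $n-i-1$ integers lying strictly between $i$ and $n$, contributing $\binom{n-i-1}{k-3}$, which is nonzero precisely when $i\le n+2-k$. Independently, $a_1$ ranges over $\{pk,\dots,i-1\}$, a set of size $i-pk$ that is nonempty because $i\ge qk\ge (p+1)k>pk+1$ (here $k\ge 3$ and $q>p$). Hence the $k$-th layer has size $\sum_{i=qk}^{n+2-k}(i-pk)\binom{n-i-1}{k-3}$, an empty (hence zero) sum unless $qk\le n+2-k$, i.e. $k\le\frac{n+2}{q+1}$. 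Adding the $k=1$, $k=2$, and $k\ge 3$ contributions gives the formula; note that for $2q\le n\le 3q$ the bound $\frac{n+2}{q+1}<3$ makes the double sum empty and the formula collapses to $1+(n-2p)$, matching the direct count.

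The only delicate point, and the thing to be careful about, is the bookkeeping of endpoints so that every interval and every binomial coefficient is counted by its naive value with no off-by-one corrections: concretely, that $n\ge 2q$ forces $n-1\ge 2p$ in the $k=2$ case, and that $i\ge qk$ forces $i-1\ge pk$ in the $k\ge 3$ case. Both reductions rely on $q\ge p+1$, which is exactly where the hypothesis $p<q$ enters; if $p=q$ the constraints on $a_1$ and $a_2$ would overlap and the clean product decomposition of each layer would fail. I would also verify once more the $k=1$ convention (that $\min S$ and $\min_2 S$ are both taken to be the unique element) so that the leading $1$ is added exactly once rather than being folded into the sum over $k$.
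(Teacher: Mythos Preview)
Your proposal is correct and follows essentially the same argument as the paper: both proofs stratify $M_{p,q,n}$ by $k=|S|$, handle $k=1$ and $k=2$ separately, and for $k\ge 3$ fix the second-smallest element $i=\min_2 S$ to obtain the factor $(i-pk)\binom{n-i-1}{k-3}$ before summing over the admissible range of $i$ and $k$. Your added remarks on where the hypothesis $p<q$ is actually used and on the endpoint bookkeeping go slightly beyond the paper's presentation but do not change the method.
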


\begin{thm}
\label{keylem} Fix $p < q \in \mathbb{N}$. Consider $(M_{q,n})_{n=1}^\infty$ and $(M_{p, q, n})_{n=1}^\infty$. For each $n\in\mathbb{N}$, define $a_n := |M_{p,q,n+q}|$.
We have
\begin{align*}
a_{n+q+1}\ =\ a_{n+q} + a_n+(q-p)|M_{q,n}|.
\end{align*}
\end{thm}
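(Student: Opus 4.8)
The plan is to establish the recurrence by a direct bijective/counting argument, mirroring the two-map technique from Bird's original proof but now tracking the "defect" that arises when the second-smallest element fails to satisfy the stricter constraint with the smaller set size. Fix $p < q$ and write $a_n = |M_{p,q,n+q}|$. First I would partition $M_{p,q,n+q+1}$, the sets $S$ with $\max S = n+q+1$, $\min S \ge p|S|$, and $\min_2 S \ge q|S|$, according to whether $n+q \in S$ or not. If $n+q \notin S$, then removing $n+q+1$ and adjoining it back... more precisely, the map $S \mapsto S \setminus \{n+q+1\} \cup \{n+q\}$ (shifting the max down by one) should biject this piece with a suitable subfamily; the sets with $n+q \notin S$ and $\max = n+q+1$ correspond to sets with $\max = n+q$. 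If $n+q \in S$, then $S$ contains both $n+q$ and $n+q+1$; removing $n+q+1$ gives a set $S'$ with $\max S' = n+q$ and $|S'| = |S| - 1$, so the constraints $\min S' \ge p|S'|$ and $\min_2 S' \ge q|S'|$ become $\min S' \ge p(|S|-1)$ and $\min_2 S' \ge q(|S|-1)$, which is a relaxation of what one needs for membership in $M_{p,q,n+q}$. The discrepancy between "$S'$ satisfies the relaxed constraints" and "$S'$ satisfies the original constraints" is exactly the source of the correction term $(q-p)|M_{q,n}|$.

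The key step, then, is to count the sets $S'$ with $\max S' = n+q$, $|S'| = m$, satisfying $\min S' \ge pm + p$ and $\min_2 S' \ge qm + q$ but NOT satisfying both $\min S' \ge pm$ and $\min_2 S' \ge qm$ — wait, that's automatic since the relaxed bounds are stronger. I need to be careful about the direction: shifting the size down makes the thresholds $p|S'|$ and $q|S'|$ smaller, so every valid $S$ containing $n+q$ and $n+q+1$ maps to an $S'$ that automatically lies in $M_{p,q,n+q}$, and conversely I need to count which $S' \in M_{p,q,n+q}$ arise this way. The ones that do NOT arise are those where re-adjoining $n+q+1$ violates a constraint: since $\max$ jumps to $n+q+1$ and $|S|$ jumps to $m+1$, we need $\min S' \ge p(m+1)$ and $\min_2 S' \ge q(m+1)$. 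So the count of "bad" $S' \in M_{p,q,n+q}$ is the number of $S'$ with $\max S' = n+q$, $|S'| = m$, and either $pm \le \min S' < p(m+1)$ or $qm \le \min_2 S' < q(m+1)$ (with $\min S' \ge pm$ throughout). Assembling these over all $m$ should give $a_n$ plus an error, and I would match the error against $(q-p)|M_{q,n}|$ by setting up an explicit bijection: a set in $M_{q,n}$ (so $\max = n$, $\min \ge q|S|$) together with a choice from a set of size $q - p$ encoding exactly how much the constraint is "slack."

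Concretely, I expect the cleanest route is: (i) show $a_{n+q+1} = |\{S \in M_{p,q,n+q+1} : n+q \notin S\}| + |\{S \in M_{p,q,n+q+1} : n+q \in S\}|$; (ii) identify the first summand with $a_{n+q}$ via the downshift bijection (checking that no constraint is disturbed because both $\min$, $\min_2$, and $|S|$ are preserved while $\max$ only decreases — one must verify the constraints only involve $\min, \min_2, |S|$, not $\max$, so this is clean); (iii) identify the second summand with $|M_{p,q,n}|$-type data by deleting $n+q+1$, then carefully account for the size shift. Here the deletion sends $S$ (with $n+q, n+q+1 \in S$, $|S| = m+1$) to $S \setminus \{n+q+1\}$ with max $n+q$ and size $m$; the image consists of those $S' \in M_{p,q,n+q}$ with the additional slack $\min S' \ge p(m+1)$, $\min_2 S' \ge q(m+1)$ — wait, we also need $n+q \in S'$. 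Hmm, actually $n+q \in S'$ is forced since $n+q \in S$. So the second summand equals $|\{S' : \max S' = n+q, n+q \in S', \min S' \ge p(|S'|+1), \min_2 S' \ge q(|S'|+1)\}|$. I would then relate this back to $a_n = |M_{p,q,n+q}|$ by a further manipulation — perhaps mapping to sets with max $n$ by a uniform downshift of all elements by $q$ — and the leftover, the sets where the relaxation genuinely matters, is governed by $|M_{q,n}|$ with multiplicity $q - p$.

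The main obstacle will be step (iii): bookkeeping the interaction between the $+1$ shift in $|S|$ and the two thresholds $p|S|$, $q|S|$ simultaneously, and proving that the resulting correction is \emph{exactly} $(q-p)|M_{q,n}|$ rather than merely bounded by it. I anticipate this requires exhibiting an explicit bijection between the "exceptional" sets and $M_{q,n} \times \{0, 1, \dots, q-p-1\}$, where the second coordinate records the difference $\min S - q|S|$ (or the value of $\min S$ relative to the threshold) in the regime where it lies strictly between the two possible threshold values $q|S|$ and $q(|S|-1)$ — an interval of length exactly $q$ — intersected with the constraint $\min S \ge p|S|$, which trims it to length $q - p$. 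Verifying this bijection is well-defined and surjective, and that the boundary cases (singletons, the range $q \le n \le 2q-1$) are consistent with the stated formula, is where the real work lies; the rest is formal.
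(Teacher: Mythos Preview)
Your strategy is exactly the paper's: partition the relevant $M_{p,q,\cdot}$ according to whether the element just below the maximum is present, biject the first piece with $a_{n+q}$, and handle the second piece by deleting the maximum and shifting down by $q$, with the ``exceptional'' sets accounting for $(q-p)|M_{q,n}|$. Two corrections are needed before this goes through.

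First, an index slip: since $a_m = |M_{p,q,m+q}|$, the set to partition for $a_{n+q+1}$ is $M_{p,q,n+2q+1}$, not $M_{p,q,n+q+1}$; your identifications in (ii) and (iii) only line up once this offset is fixed. Second, and more substantively, your description of the exceptional sets is wrong. After deleting $\max S = n+2q+1$ and shifting down by $q$, the $\min_2$ constraint is \emph{automatic}: $\min_2 S - q \ge q|S| - q = q(|S|-1)$. Only the $\min$ constraint can fail, and it fails precisely when $p|S| \le \min S < p|S| + (q-p)$. Hence the second coordinate of your bijection should record $\min S - p|S| \in \{0,\dots,q-p-1\}$, not $\min S - q|S|$; your ``interval $[q(|S|-1), q|S|)$ intersected with $\min S \ge p|S|$'' does not have length $q-p$ (it has length $q$ when $|S| \ge q/(q-p)$ and length $(q-p)|S|$ otherwise). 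Once the slices $C_i = \{S : \min S = p|S|+i\}$ are correctly identified, the bijection $C_i \to M_{q,n}$ is $S \mapsto (S \setminus \{\min S, \max S\}) - 2q$: dropping two elements and shifting by $2q$ converts the condition $\min_2 S \ge q|S|$ into the single condition $\min \ge q\cdot|\,\cdot\,|$, landing exactly in $M_{q,n}$.
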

Note that when $p=q$, we have Theorem \ref{mainTheo}.
We have the following corollary that shows a recurrence relation for the sequence $(|M_{p,q,n}|)_{n=1}^\infty$.
\begin{cor}\label{linear2}
Fix $p<q$ in $\mathbb{N}$. For $n\in\mathbb{N}$, define $a_n:=|M_{p,q,n+q}|$. We have
\begin{align*}
a_{n+2q+2}\ =\ 2a_{n+2q+1}-a_{n+2q}+2a_{n+q+1}-2a_{n+q}-a_n.
\end{align*}
\end{cor}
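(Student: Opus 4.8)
The plan is to derive Corollary \ref{linear2} directly from Theorem \ref{keylem} by eliminating the ``inhomogeneous'' term $(q-p)|M_{q,n}|$ using the known recurrence for $(|M_{q,n}|)$. First I would record, as a consequence of Theorem \ref{mainTheo} applied with $p$ replaced by $q$, that the sequence $b_n := |M_{q,n}|$ satisfies $b_{n+q+1} = b_{n+q} + b_n$ for all $n \ge 1$. Setting $a_n := |M_{p,q,n+q}|$ as in the statement, Theorem \ref{keylem} reads
\begin{equation*}
a_{n+q+1} - a_{n+q} - a_n \ =\ (q-p)\, b_n .
\end{equation*}
The idea is to apply a suitable difference operator to both sides so that the right-hand side is annihilated: the operator corresponding to the characteristic relation of $(b_n)$ is $E^{q+1} - E^q - I$, where $E$ is the shift $n \mapsto n+1$.

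Concretely, I would introduce $c_n := a_{n+q+1} - a_{n+q} - a_n$, so that $c_n = (q-p) b_n$ and hence $c_{n+q+1} = c_{n+q} + c_n$ for all $n \ge 1$. Expanding this last identity in terms of the $a$'s gives
\begin{align*}
&\bigl(a_{n+2q+2} - a_{n+2q+1} - a_{n+q+1}\bigr) \\
&\qquad =\ \bigl(a_{n+2q+1} - a_{n+2q} - a_{n+q}\bigr) + \bigl(a_{n+q+1} - a_{n+q} - a_n\bigr),
\end{align*}
and collecting terms yields exactly
\begin{equation*}
a_{n+2q+2} \ =\ 2a_{n+2q+1} - a_{n+2q} + 2a_{n+q+1} - 2a_{n+q} - a_n ,
\end{equation*}
which is the claimed recurrence. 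The only points needing care are bookkeeping of the index shifts and confirming that the range of validity ($n \ge 1$) is preserved: Theorem \ref{keylem} gives $c_n = (q-p)b_n$ for all $n \ge 1$, and the recurrence $b_{n+q+1} = b_{n+q} + b_n$ also holds for all $n \ge 1$ (the base cases $|M_{q,1}|,\dots$ are covered by Theorem \ref{mainTheo}(2)), so the combined identity for the $a$'s is valid for every $n \ge 1$.

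There is essentially no genuine obstacle here; the ``hard part'' is purely organizational, namely making sure the shifted copy of the $b$-recurrence lines up with the shifted copies of the $a$-relation so that every $b$-term cancels. One subtlety worth double-checking is whether Theorem \ref{keylem} is stated with enough uniformity in $n$ (it is, being asserted ``for each $n \in \mathbb{N}$''), and whether applying Theorem \ref{mainTheo}(2) with parameter $q$ in place of $p$ is legitimate — it is, since $q \in \mathbb{N}$ and the theorem is quantified over all such parameters. Thus the proof reduces to the two-line algebraic manipulation above, and I would present it essentially as written, perhaps preceded by the one-sentence remark that $(|M_{q,n}|)$ is the generalized Schreier--Fibonacci sequence of order $q$.
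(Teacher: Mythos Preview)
Your proof is correct and is essentially the same as the paper's: both apply Theorem~\ref{keylem} at the three shifts $n$, $n+q$, $n+q+1$ and use the order-$q$ recurrence $|M_{q,n+q+1}|=|M_{q,n+q}|+|M_{q,n}|$ from Theorem~\ref{mainTheo} to cancel the inhomogeneous terms. Your packaging via $c_n:=a_{n+q+1}-a_{n+q}-a_n$ and the annihilating operator $E^{q+1}-E^q-I$ is just a tidy reformulation of the paper's ``subtract equations (\ref{first}) and (\ref{second}) from (\ref{last})'' step.
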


\begin{proof}
Fix $n\in\mathbb{N}$. By Theorem \ref{keylem}, we have
\begin{align}
      \label{first}a_{n+q+1}-a_{n+q}&\ =\ a_n+(q-p)|M_{q,n}| \\
      \label{second}a_{n+2q+1}-a_{n+2q}&\ =\ a_{n+q}+(q-p)|M_{q,n+q}| \\
      \label{last}a_{n+2q+2}-a_{n+2q+1}&\ =\ a_{n+q+1}+(q-p)|M_{q,n+q+1}|.
\end{align}
By Theorem \ref{mainTheo}, we know that $|M_{q,n+q+1}|=|M_{q,n+q}|+|M_{q,n}|$. Subtract Equation (\ref{first}) and Equation (\ref{second}) from Equation (\ref{last}) to finish the proof.
\end{proof}

\begin{rek} \normalfont For fixed $p, q$, Theorem \ref{linear2} gives a recurrence relation of depth $2q+2$; interestingly, the depth is independent of $p$.
\end{rek}
%%%%%%%%%%%%%%%%%%%%%%%%%%%%%%%%%%%%%%%%%%%%%%%%%%%%%%%%%%%%%%%%%%%%%%%%%%%%%%%%%%%%%%%%%%%%%%%%%%%%%%%%%%%%%%%%%%%%%%%%%%%%%%%%%%%%%%%%%%%%%%%%%%%%%%%%%%%%%%%%%%%%%%%%%%%%%%%%%%%%%%%%%%%%%%%%%%%%%%%%%%%%%%%%%%%%%%%%%%%%%%%%%%%%%%%%%%%%%%%%%%%%%%%%%%%%%%%%%%%%%%%%%%%%%%%%%%%%%%%%%%%%%%%%%%%%%%%%%%%%%%%%%%%%%%%%%%%%%%%%%
%%%%%%%%%%%%%%%%%%%%%%%%%%%%%%%%%%%%%%%%%%%%%%%%%%%%%%%%%%%%%%%%%%%%%%%%%%%%%%%%%%%%%%%%%%%%%%%%%%%%%%%%%%%%%%%%%%%%%%%%%%%%%%%%%%%%%%%%%%%%%%%%%%%%%%%%%%%%%%%%%%%%%%%%%%%%%%%%%%%%%%%%%%%%%%%%%%%%%%%%%%%%%%%%%%%%%%%%%%%%%%%%%%%%%%%%%%%%%%%%%%%%%%%%%%%%%%%%%%%%%%%%%%%%%%%%%%%%%%%%%%%%%%%%%%%%%%%%%%%%%%%%%%%%%%%%%%%%%%%%%
%%%%%%%%%%%%%%%%%%%%%%%%%%%%%%%%%%%%%%%%%%%%%%%%%%%%%%%%%%%%%%%%%%%%%%%%%%%%%%%%%%%%%%%%%%%%%%%%%%%%%%%%%%%%%%%%%%%%%%%%%%%%%%%%%%%%%%%%%%%%%%%%%%%%%%%%%%%%%%%%%%%%%%%%%%%%%%%%%%%%%%%%%%%%%%%%%%%%%%%%%%%%%%%%%%%%%%%%%%%%%%%%%%%%%%%%%%%%%%%%%%%%%%%%%%%%%%%%%%%%%%%%%%%%%%%%%%%%%%%%%%%%%%%%%%%%%%%%%%%%%%%%%%%%%%%%%%%%%%%%%
\section{Proof of Theorem \ref{mainTheo}}
Given a set $S$ and a number $a$, define
$$a+S \ := \ \{a+s\,:\, s\in S\}.$$
In our proof, we partition $M_{p, n+p+1}$ into two disjoint sets $A$ and $B$ then use bijective maps to show that $|A| = |M_{p, n+p}|$ and $|B| = |M_{p, n}|$. This is the same technique used in \cite{UA}.

\begin{proof}[Proof of Theorem \ref{mainTheo}] \ \\

\noindent (1) To find an explicit formula for $|M_{p, n+p}|$, we use the following simple counting argument. Let $k$ be the minimum element of our set $S\in M_{p, n+p}$. If $k = n+p$, then $S$ $=$ $\{n+p\}$. If $k< n+p$, then we can choose it to be any number between $1$ and $n+p-1$. For each of these choices, we have fixed the maximum and the minimum of our set and so, we can choose $j$ elements between $k+1$ and $n+p-1$, where $j\le k/p-2$. Therefore
$$|M_{p,n+p}| \ =\ \sum_{k=1}^{n+p-1}\sum_{j=0}^{k/p-2}\binom{n+p-k-1}{j}+1,$$ which is the desired formula. \\ \

\noindent (2) The set $M_{p, n+p +1}$ is the union of
\begin{itemize}
    \item [(a)] $A = \{S\in M_{p, n+p+1}: n+p\notin S\}$,
    \item [(b)] $B = \{S\in M_{p, n+p+1}: n+p\in S\}$.
\end{itemize}
We compute $|A|$ by considering the map $R_1: M_{p, n+p}\rightarrow A$ with $R_1(S) = (S\backslash\{n+p\})\cup \{n+p+1\}$. The map is well-defined because it preserves the cardinality of the set and does not decrease the minimum element of a set. Injectivity of $R_1$ is clear. The map is also onto because given $U\in A$, $R_1(U\backslash\{n+p+1\}\cup \{n+p\}) = U$. So, $|A| = |M_{p, n+p}|$.

Next, we determine $|B|$ by considering the map $R_2: M_{p, n}\rightarrow B$ with $R_2(S) = (S+p)\cup \{n+p+1\}$. Since $\min S\ge p |S|$, $\min (S+p)\ge p(|S|+1)$. This shows that $R_2$ is well-defined. Injectivity is clear. The map is also onto because given $U\in B$, $R_2((U\backslash \{n+p+1\})-p) = U$. So, $|B| = |M_{p, n}|$. We conclude that $$|M_{p,n+p+1}|\ \ =\ \ |M_{p,n+p}|\ +\ |M_{p,n}|.$$
\end{proof}

%%%%%%%%%%%%%%%%%%%%%%%%%%%%%%%%%%%%%%%%%%%%%%%%%%%%%%%%%%%%%%%%%%%%%%%%%%%%%%%%%%%%%%%%%%%%%%%%%%%%%%%%%%%%%%%%%%%%%%%%%%%%%%%%%%%%%%%%%%%%%%%%%%%%%%%%%%%%%%%%%%%%%%%%%%%%%%%%%%%%%%%%%%%%%%%%%%%%%%%%%%%%%%%%%%%%%%%%%%%%%%%%%%%%%%%%%%%%%%%%%%%%%%%%%%%%%%%%%%%%%%%%%%%%%%%%%%%%%%%%%%%%%%%%%%%%%%%%%%%%%%%%%%%%%%%%%%%%%%%%%
%%%%%%%%%%%%%%%%%%%%%%%%%%%%%%%%%%%%%%%%%%%%%%%%%%%%%%%%%%%%%%%%%%%%%%%%%%%%%%%%%%%%%%%%%%%%%%%%%%%%%%%%%%%%%%%%%%%%%%%%%%%%%%%%%%%%%%%%%%%%%%%%%%%%%%%%%%%%%%%%%%%%%%%%%%%%%%%%%%%%%%%%%%%%%%%%%%%%%%%%%%%%%%%%%%%%%%%%%%%%%%%%%%%%%%%%%%%%%%%%%%%%%%%%%%%%%%%%%%%%%%%%%%%%%%%%%%%%%%%%%%%%%%%%%%%%%%%%%%%%%%%%%%%%%%%%%%%%%%%%%
%%%%%%%%%%%%%%%%%%%%%%%%%%%%%%%%%%%%%%%%%%%%%%%%%%%%%%%%%%%%%%%%%%%%%%%%%%%%%%%%%%%%%%%%%%%%%%%%%%%%%%%%%%%%%%%%%%%%%%%%%%%%%%%%%%%%%%%%%%%%%%%%%%%%%%%%%%%%%%%%%%%%%%%%%%%%%%%%%%%%%%%%%%%%%%%%%%%%%%%%%%%%%%%%%%%%%%%%%%%%%%%%%%%%%%%%%%%%%%%%%%%%%%%%%%%%%%%%%%%%%%%%%%%%%%%%%%%%%%%%%%%%%%%%%%%%%%%%%%%%%%%%%%%%%%%%%%%%%%%%%
\section{Proof of Theorem \ref{further} and Theorem \ref{keylem}}

Our proof of Theorem \ref{further} employs straightforward counting arguments. For Theorem \ref{keylem}, we partition $M_{p,q,n+2q+1}$ into three subsets and use bijective maps to argue that the cardinalities of these three subsets are equal to $a_{n+q}, a_n$ and $(q-p)|M_{q,n}|$, respectively.

\begin{proof}[Proof of Theorem \ref{further}]
Fix $p < q \in \mathbb{N}$. We prove the theorem by considering different ranges for $n$. For $n \le q - 1$, if $|S| > 0$ we have
the contradiction $$q \ \le\ q|S| \ \le\ {\rm min}_2 S \ \le\ n \ \le\ q - 1.$$ For $q \le n \le 2q - 1$, we have
$|S| = 1$ since otherwise we have the contradiction $$2q \ \le\ q|S| \ \le\ {\rm min}_2 S \ \le\ n \ \le\ 2q - 1.$$ If $n\ge 2q$, we prove that \begin{align*}\label{baseeq}|M_{p,q, n}|\ =\ 1+(n-2p)+\sum_{k\ =\ 3}^{\frac{n+2}{q+1}}\sum_{i\ =\ q k}^{n-k+2}(i-p k)\binom{n-i-1}{k-3}.\end{align*}

\begin{itemize}
\item The $1$ on the right hand side comes from the set $\{n\}$. \\ \

\item For a two-element set $S$, the maximum element $n$ is also the second smallest element. Because $\min_2 S = n \ge 2q$, $\min_2 S/q = n/q \ge 2q/q  = 2 = |S|$. Let $m = \min S$. As we need $m/p \ge |S| = 2$, we must have $m\ge 2p$. Therefore $m$ can be any value from $2p$ to $n-1$. Hence we have $n-2p$ sets of $2$ elements.\\ \

\item For sets with at least three elements, we first find the range for the second smallest element. Let $\min_2S = i$ and $|S| = k$. Since there are $k-2$ elements bigger than $i$, $i\le n-k+2$. Because $\min_2 S/q\ge |S|$, we have $i\ge q k$. So, $q k\le i\le n-k+2$. Next, we find the upper bound for $k$. It follows from the fact that $q k\le n-k+2$, and thus we obtain $k\le \frac{n+2}{q+1}$. With $i$ and $k$ fixed, there are $i-p k$ choices for $\min S$ because $i = \min_2 S>\min S \ge p k$. Finally, we have $\binom{n-i-1}{k-3}$ choices to pick $k-3$ elements between $\min_2 S = i$ and $n$, so our formula is correct.
\end{itemize}
\end{proof}

\begin{proof}[Proof of Theorem \ref{keylem}]
For a nonempty, finite set $S$, define $S' := S\backslash \{\max S\}$. Clearly $M_{p, q, n+2q+1}$ is the union of three following disjoint sets:
\begin{itemize}
    \item [(a)] $A := \{S\in M_{p, q, n+2q+1}: n+2q\notin S\}$,
    \item [(b)] $B := \{S\in M_{p, q, n+2q+1}: n+2q\in S, S' - q \in M_{p,q, n+q}\}$, and
    \item [(c)] $C := \{S\in M_{p, q, n+2q+1}: n+2q\in S, S' - q \notin M_{p,q, n+q}\}$.
\end{itemize}

%%%%%%%%%%%%%%%%%%%%%%%%%%%%%%%%%%%%%%%%%%%%%%%%%%%%%%%%%%%%%%%%%%%%%%%%%%%%%%%%%%%%%%%%%%%%%%%%%%%%%%%%%%%

Let $\tau(S) = (S\backslash \{\max S\})\cup\{n+2q+1\}$. We compute $|A|$ by considering the map $\tau: M_{p, q, n+2q}\rightarrow A$.
The map is well-defined because
\begin{enumerate}
    \item for all $S\in M_{p, q, n+2q}$, $\tau(S)$ does not contain $n+2q$,
    \item $\tau$ does not change the cardinality of a set, while both the smallest and the second smallest of the set do not decrease.
\end{enumerate}
Clearly $\tau$ is one-to-one. We show that it is also onto. Let $U\in A$. If $|U| = 1$, that is $U = \{n+2q+1\}$, then $\tau(\{n+2q\}) = U$. If $|U| = 2$, we have $U = \{m,n+2q+1\}$ for some $2p\le m < n+2q$. Then $\tau(\{m,n+2q\}) = U$. If $|U|\ge 3$, then \begin{align*}\tau(\{n+2q\}\cup U\backslash \{n+2q+1\}) \ =\ U.\end{align*} Therefore $\tau$ is onto and thus, bijective. So, $|A| = |M_{p, q, n+2q}| = a_{n+q}$. \\ \

Let $\psi(S) = (S+q)\cup \{n+2q+1\}$. We compute $|B|$ by considering the map $\psi: M_{p,q, n+q} \rightarrow B$. Note that $\psi$ is well-defined because while $\psi$ makes the cardinality of a set increase by $1$, both the smallest and the second smallest increase by $q$. Clearly $\psi$ is one-to-one, and by the definition of $B$, it is also onto. Therefore $|B| = |M_{p, q, n+q}| = a_n$. \\ \

Finally, we compute $|C|$. Partition $C$ into $C_i$, where
$$C_i \ =\ \{S\in M_{p, q, n+2q+1}\, :\, n+2q\in S \mbox{ and }p|S| + i = \min S\},$$
for $0\le i\le q-p-1$. We show that $C = \cup_{i=0}^{q-p-1}C_i$. Let $F\in C_i$ for some $0\le i\le q-p-1$. We have
$$\min(F'-q)\ =\ \min F-q\ =\ p|F|+i-q\ <\ p |F| -p \ =\ p|F'-q|.$$
So, $F'-q \notin M_{p, q, n+q}$. Hence, $F\in C$. We have shown that $\cup_{i=0}^{q-p-1}C_i \subseteq C$. Now, let $E\in C$. Because $E\in M_{p, q, n+2q+1}$ and $E'-q\notin M_{p,q, n+q}$, it is straightforward to deduce that $\min (E'-q) < p|E'-q|$, which implies that $p|E|\le \min E < p |E| + (q-p)$. Therefore, $\min E = p |E| + i$ for some $0\le i\le q-p-1$. This shows that $C\subseteq \cup_{i=0}^{q-p-1}C_i$. We conclude that $C = \cup_{i=0}^{q-p-1}C_i$.

It remains to prove that $|C_i| = |M_{q,n}|$. Consider the map
\begin{align*}
    \phi_i: C_i &\longrightarrow M_{q, n}\\
            S   &\longrightarrow (S'\backslash \{\min S\}) - 2q.
\end{align*}
We show that $\phi_i$ is well-defined as follows. Let $F\in C_i$. Observe that
\begin{align*}q|\phi_i(F)| &\ =\ q|(F'\backslash \{\min F\}) - 2q|\ =\ q(|F| -2) = q|F|-2q\\
&\ \le\ {\rm min}_2 F - 2q \ =\ \min ((F'\backslash \{\min F\})-2q)\ =\ \min \phi_i(F).
\end{align*}
To see that $\phi_i$ is onto, let $G\in M_{q, n}$ and $H = \{p(|G|+2)+i\}\cup (G+2q)\cup \{n+2q+1\}$. We have $\min H = p(|G|+2) + i$ since
\begin{align*}p(|G| + 2)+i \ \le\ p(|G|+2)+(q-p)\ < \ p|G| + 2q \ \le\ \min (G+2q).\end{align*}
It follows that $H\in C_i$ because
\begin{align*}p|H| &\ =\ p(|G|+2) \ \le \ p(|G| + 2) + i \ =\  \min H,\ {\rm and}\\
q|H| &\ =\ q(|G|+2)\ \le\ \min G + 2q\ =\ {\rm min}_2 H.
\end{align*}
Clearly $\phi_i(H) = G$ and thus $\phi_i$ is onto. Since injectivity of $\phi_i$ is clear, $\phi_i$ is bijective. This shows that $|C_i| = |M_{q, n}|$ and so, $|C| = (q-p)|C_i| = (q-p)|M_{q,n}|$.

We conclude that
$$|M_{p, q, n+2q+1}| \ =\ |A| + |B| +|C| \ =\ |a_{n+q}|+|a_{n}|+(q-p)|M_{q,n}|.$$
\end{proof}

%%%%%%%%%%%%%%%%%%%%%%%%%%%%%%%%%%%%%%%%%%%%%%%%%%%%%%%%%%%%%%%%%%%%%%%%%%%%%%%%%%%%%%%%%%%%%%%%%%%%%%%%%%%%%%%%%%%%%%%%%%%%%%%%%%%%%%%
%%%%%%%%%%%%%%%%%%%%%%%%%%%%%%%%%%%%%%%%%%%%%%%%%%%%%%%%%%%%%%%%%%%%%%%%%%%%%%%%%%%%%%%%%%%%%%%%%%%%%%%%%%%%%%%%%%%%%%%%%%%%%%%%%%%%%%%
%%%%%%%%%%%%%%%%%%%%%%%%%%%%%%%%%%%%%%%%%%%%%%%%%%%%%%%%%%%%%%%%%%%%%%%%%%%%%%%%%%%%%%%%%%%%%%%%%%%%%%%%%%%%%%%%%%%%%%%%%%%%%%%%%%%%%%%

%%%%%%%%%%%%%%%%%%%%%%%%%%%%%%%%%%%%%%%%%%%%%%%%%%%%%%%%%%%%%%%%%%%%%%%%%%%%%%%%%%%%%%%%%%%%%%%%%%%%%%%%%%%%%%%%%%%%%%%%%%%%%%%%%%%%%%%
%%%%%%%%%%%%%%%%%%%%%%%%%%%%%%%%%%%%%%%%%%%%%%%%%%%%%%%%%%%%%%%%%%%%%%%%%%%%%%%%%%%%%%%%%%%%%%%%%%%%%%%%%%%%%%%%%%%%%%%%%%%%%%%%%%%%%%%

\newcommand{\etalchar}[1]{$^{#1}$}

\ \\

\noindent MSC2010: 11B39, 11Y55


\begin{thebibliography}{KKMG}

\bibitem[AA]{AA}
D. Alspach and S. Argyros, \textit{Complexity of weakly null sequences}, Warszawa: Instytut Matematyczny Polskiej Akademi Nauk, 1992.

\bibitem[UA]{UA}
A. Bird, \emph{Schreier sets and the Fibonacci sequence}, \bburl{https://outofthenormmaths.wordpress.com/2012/05/13/jozef-schreier-schreier-sets-and-the-fibonacci-sequence/}.

\bibitem[Sch]{Sch}
J. Schreier, \emph{Ein Gegenbeispiel zur Theorie der schwachen Konvergentz}, Studia Math. \textbf{2} (1930), 58--62.



\end{thebibliography}
\end{document}